\documentclass[11pt,letterpaper]{amsart}
\usepackage{latexsym,amsfonts,amssymb,amsmath,amsthm, datetime2}
\usepackage{graphicx}
\usepackage{color}
\usepackage{mathrsfs}
\usepackage{amsmath, amsthm, amsfonts, amssymb}
\usepackage{wrapfig}
\usepackage{stackrel}
\usepackage{color}
\usepackage{float}
\usepackage{enumitem}
\usepackage{mathtools}
\usepackage{multicol}
\usepackage[normalem]{ulem}
\usepackage{xcolor}
\usepackage{hyperref}
\usepackage{cancel}

\usepackage[margin=1in]{geometry}

\newcommand{\half}{\ensuremath{ \frac{1}{2}}}

\newcommand{\ord}{\text{ord}}

\newcommand{\thalf}{\tfrac12}

\newtheorem{lem}{Lemma}
\newtheorem{prop}{Proposition}

\theoremstyle{plain}		
	\newtheorem{mytheo}{Theorem}[section]

\theoremstyle{remark}

\numberwithin{equation}{section}

\begin{document}
\author{Rizwanur Khan}

 \address{Department of Mathematical Sciences\\ University of Texas at Dallas\\ Richardson, TX 75080-3021}
 \email{rizwanur.khan@utdallas.edu}

	  \keywords{$L$-functions, moments, Dirichlet characters, multiplicative order}
  \thanks{The author was supported by the National Science Foundation grant DMS-2341239.}
\subjclass[2020]{11M06}

\title[First Moment of Dirichlet $L$-functions over Character Group Generators]{The First Moment of Dirichlet $L$-functions over Generators of the Character Group}

\begin{abstract}
We evaluate the average of the central values $L(1/2,\chi)$ over the generators of the group of Dirichlet characters modulo $q$, as $q\to\infty$ through the primes.
\end{abstract}

\maketitle

\section{Introduction}

The study of ranks of elliptic curves is a central problem in modern number theory, closely intertwined with the theory of $L$-functions. An interesting question in this direction is to obtain an upper bound for the analytic rank of an elliptic curve $E/\mathbb{Q}$ over the cyclotomic extension $\mathbb{Q}(e^{2\pi i/q})$, in terms of $q$. This leads naturally to the study of $L$-functions in special families of Dirichlet characters.

Rohrlich \cite{Rohrlich} proved that the analytic rank is bounded for $q$ any large power of a fixed prime. A key step in his method was an asymptotic evaluation of the mean of central $L$-values
\begin{align*}
 \sum_{\substack{ \chi \bmod q \\ \text{ord}(\chi)=r}} L(\thalf, E\otimes \chi),
\end{align*}
where the summation is over the family of primitive Dirichlet characters of modulus $q$ and order $r$, for permissible values $r|\phi(q)$. This family can also be interpreted as an orbit of the natural action of Gal$(\mathbb{\overline{Q}}/\mathbb{Q})$ on the primitive characters. For prime values of $q$, Chinta \cite{Chinta} showed that the analytic rank can grow with $q$ at worst like a small power of $q$. This exponent was later improved by Dasgupta and Khan \cite{DasguptaKhan}. In this case, it seems much harder to evaluate the above mean. Chinta's idea to get around this difficulty was to instead study the sum
\begin{align*}
 \sum_{\substack{ \chi \bmod q \\ \text{ord}(\chi)=r}} L(\thalf, E\otimes \chi) M(\thalf, \chi),
\end{align*} 
where $M(\thalf, \chi)$ is an analytic device called a mollifier. The mollifier is designed to approximate the inverse of  $L(\thalf, E\otimes \chi)$, so that $L(\thalf, E\otimes \chi) M(\thalf, \chi)$ is expected to be close to $1$. This simplifies the evaluation of the sum above, at least when $r$ is sufficiently large relative to $q$. However, this approach leaves open the problem of understanding the mean of the $L$-functions themselves, among characters of a given order. Motivated by the importance of these families, we take a first step towards this goal by considering a more basic problem. For $q$ prime, we study the mean of the central values $L(\half, \chi)$ of Dirichlet $L$-functions over the characters mod $q$ of order $q-1$, which are precisely the generators of the character group. This is the simplest $L$-function associated to $\chi$ and the largest family of a fixed order, with $\phi(q-1)$ elements, yet surprisingly the problem is already nontrivial. 
\begin{mytheo} \label{main} For $q$ a prime number, we have
\begin{align*}
\frac{1}{\phi(q-1)} \sum_{\substack{ \chi \bmod q \\ \mathrm{ord}(\chi)=q-1 }} L(\thalf,  \chi) = 1 + O((\log q)^{-\delta})
\end{align*}
for some $\delta>0$.
\end{mytheo}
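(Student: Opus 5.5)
Detect the generators by Möbius inversion over the cyclic character group. Since the characters mod $q$ form a cyclic group of order $q-1$, a character $\chi$ has order exactly $q-1$ if and only if $\chi$ is not a $p$-th power for any prime $p\mid q-1$. Hence
\begin{align*}
\mathbf{1}_{\mathrm{ord}(\chi)=q-1}=\sum_{d\mid q-1}\mu(d)\,\mathbf{1}_{\chi=\psi^{d}\text{ for some }\psi}
=\sum_{d\mid q-1}\frac{\mu(d)}{d}\sum_{\psi^{\,d}\text{-type}}\ldots,
\end{align*}
and more usefully, $\chi$ is a $d$-th power iff $\chi$ is trivial on the subgroup of $d$-th roots of unity in $(\mathbb{Z}/q)^\times$. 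Orthogonality then gives
\begin{align*}
\sum_{\mathrm{ord}(\chi)=q-1} L(\thalf,\chi)=\sum_{d\mid q-1}\mu(d)\frac{q-1}{d}\sum_{\substack{n\ge1\\ n^{(q-1)/d}\equiv 1 \bmod q}}\frac{V(n)}{n^{1/2}}+(\text{dual term}),
\end{align*}
after inserting the approximate functional equation $L(\thalf,\chi)=\sum_n \chi(n)n^{-1/2}V(n/\sqrt q)+\epsilon_\chi\sum_n\overline\chi(n)n^{-1/2}V(n/\sqrt q)$ (symmetric, length $\sqrt q$ each). The principal contribution is $n=1$, which gives $\sum_{d}\mu(d)(q-1)/d=\phi(q-1)$, the main term $1$. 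The contributions of $\chi$ even/odd should be handled by restricting to one parity via $\frac12(1\pm\chi(-1))$, or simply folding $n$ and $-n$ together.

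For the diagonal part with $n>1$, I split $d$ by size. When $d\le D$ is small, the condition $n^{(q-1)/d}\equiv1$ says $n$ lies in a subgroup of index $d$; for $1<n\le\sqrt q$ I bound the number of such $n$ trivially by $\sqrt q$ and get a contribution $\ll\sum_{d\le D}(q/d)\,q^{1/4}\cdot(\ldots)$, which is too large, so instead one uses that $n\le q^{1/2}$ being in the subgroup of index $d$ is "expected" with density $1/d$: the weighted count is $\sum_{n}n^{-1/2}\mathbf 1\approx q^{1/4}/d$, giving total $\ll q\,q^{1/4}\sum_d |\mu(d)|/d^2$ — still compared to $\phi(q-1)$ this needs $n$-terms to be negligible, which they are not individually. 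The correct viewpoint: the sum over $n$ of $\frac{q-1}{d}\mathbf 1_{n\in H_d}$ counts characters, and bounding it by $q^{1/2+\epsilon}$ is hopeless; so I should use instead that the total is $\sum_{d}\mu(d)\cdot\#\{\text{$d$-th powers}\}\cdot(\text{average of }L\text{ over the subgroup of $d$-th powers})$, and for small $d$ the average of $L(\thalf,\chi)$ over the $(q-1)/d$ characters that are $d$-th powers is $1+O(\text{small})$ by the standard large-family first moment, with error from $n\ne1$, $n\in H_d$, $n\le q^{1/2}$: since $H_d$ has index $d$, one needs $n\equiv$ element of a large subgroup — the count of small $n\le N$ with $n^{(q-1)/d}\equiv1$ is $\ll N^{2}/ \ldots$; I would use that such $n$ (and also $n_1/n_2$ ratios for the dual term) are rare when $d$ is small relative to $q^{1/2}$, via a bound like $\#\{n\le N: n\in H\}\ll N\,|H|/q+ N^{1/2}$-type from Burgess/character sums.

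The main obstacle is the large $d$ range: $\sum_{d\mid q-1,\,d>D}\mu(d)\cdot(\ldots)$, where the subgroup of $d$-th powers is tiny and no asymptotic is available. Here I would bound trivially: the contribution of characters that are $d$-th powers for some squarefree $d>D$ is at most $\sum_{d>D}\frac{q}{d}\max|L|$ — too weak pointwise, so instead truncate the Möbius sum using the sieve: replace $\mu$ by a Brun/Selberg-type sieve weight supported on $d$ composed of primes $p\le z$, accepting error from characters whose order misses only large primes. Characters of order $(q-1)/m$ with $m$ having all prime factors $>z$ number at most $\sum_{m}(q/m)$ over such $m\mid q-1$, i.e. $\ll q\,\omega(q-1)/z$, and I would bound $L$ over these via the second moment (Cauchy--Schwarz with the full-family second moment $\sum_\chi|L|^2\ll q\log q$), giving $\ll q(\log q)^{1/2}(\log q/z)^{1/2}$ relative error. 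Choosing $z$ a power of $\log q$ and the sieve level $D=z^{O(1)}$ small yields savings $(\log q)^{-\delta}$, consistent with the stated error. The delicate point I expect is controlling the sieve-weight remainder uniformly while $\phi(q-1)/(q-1)$ may be as small as $1/\log\log q$.
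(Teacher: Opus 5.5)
There is a genuine gap: your proposal never supplies the estimate that carries the theorem, and the set-up is also off.

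\textbf{The set-up.} Your orthogonality step is inverted. The $d$-th powers are the characters trivial on $\{x: x^d\equiv 1\}$, i.e. those of order dividing $(q-1)/d$. Hence $\sum_{\chi^{(q-1)/d}=1}\chi(n)$ equals $\frac{q-1}{d}$ if $n^d\equiv 1 \bmod q$ and $0$ otherwise. So the surviving $n$ lie in the subgroup of \emph{order} $d$, not of index $d$. With this corrected, the M\"obius sum over \emph{all} $d\mid q-1$ collapses exactly, for each $n$, to $\phi(q-1)\mu(\mathrm{ord}(n))/\phi(\mathrm{ord}(n))$; this is the averaging Lemma~\ref{averaging}. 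Several of your steps then become unnecessary:
\begin{itemize}
\item There is no large-$d$ range, so no sieve truncation and no second-moment/Cauchy--Schwarz step is needed. (Sieve majorants and minorants only give inequalities against nonnegative weights, which $L(\thalf,\chi)$ is not.)
\item Every generator is odd, so an unbalanced approximate functional equation makes the dual term $O(q^{-10})$, and you need not analyze root numbers.
\end{itemize}
The theorem therefore reduces to showing $\sum_{2\le n\le q^{1+\varepsilon}}\frac{1}{n^{1/2}\phi(\mathrm{ord}(n))}\ll(\log q)^{-\delta}$. This is exactly what your proposal does not prove.

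\textbf{The missing estimate.} Your claim that the average of $L(\thalf,\chi)$ over $d$-th powers is $1+o(1)$ for small $d$ fails once $d\gtrsim\log q$. For $q=2^p-1$ a Mersenne prime and $d=p\mid q-1$, the integers $n=2,4,\dots,2^{p-1}$ all satisfy $n^d\equiv 1$. Each contributes about $n^{-1/2}$, so the normalized average is about $1+2^{-1/2}+2^{-1}+\cdots$, not $1+o(1)$. Your own error analysis needs $z$ larger than $(\log q)^2$ and level $D\ge z$, so such $d$ are in your range.

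Burgess-type counts of small $n$ in a subgroup cannot repair this. The obstruction is not that many small $n$ lie in the subgroup, but that a few very small ones can. The saving must come from the weight $1/\phi(\mathrm{ord}(n))$, combined with a bound on how many $n\in[N,2N)$ can share a given order $k$. The paper obtains this elementarily:
\begin{enumerate}
\item For $n\ge 2$ one has $n^{\mathrm{ord}(n)}\ge q$, so $\mathrm{ord}(n)\ge\log q/\log n$.
\item Let $a\in[N,2N)$ have order $k$. Whenever $a^r\,\%\,q\in[N,2N)$, the next $\gg\log q/\log N$ powers equal $(a^r\,\%\,q)a^j\in[2N,q)$ and so miss $[N,2N)$. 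This gives $\alpha_N(k)\ll k\log N/\log q$.
\item A greedy maximization of $\sum_k\alpha_N(k)/k$ subject to $\sum_k\alpha_N(k)=N$ bounds the dyadic piece by $\frac{(\log\log q)\log N}{N^{1/2}\log q}\,d\big(q-1;(N\log q/\log N)^{1/2}\big)$.
\item This restricted divisor count is bounded trivially for $N\le\exp((\log q)^{1/10})$. For larger $N$, the paper separates divisors having a prime factor $\ge\exp((\log q)^{1/20})$ from smooth ones.
\end{enumerate}
Without some substitute for these steps, your argument does not control the $n\ge 2$ terms.
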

\noindent An explicit value of $\delta$ is found in our proof but we have not tried to optimize it.

As we will see, after using an approximate functional equation and averaging over characters, the problem boils down to showing that
\[
\sum_{\substack{2\le n \le q^{1+\varepsilon}\\ (n,q)=1}} \, \frac{\mu(\ord(n))}{n^\half \phi(\ord(n))} \ll (\log q)^{-\delta},
\]
where $\ord(n)$ is the multiplicative order of $n$ modulo $q$. The term corresponding to $n=1$ has been excluded here because it forms the main term of Theorem \ref{main}. It seems difficult to track the sign changes of $\mu(\ord(n))$, so we apply absolute values term-wise, thereby forsaking any potential cancelation arising from these factors. Proving the required estimate for the resulting sum of positive terms turns out to be quite delicate, but nevertheless elementary methods suffice. Working over dyadic intervals, we need to obtain some control over the number of integers in $[N,2N]$ of order $k$, where $k \mid \phi(q)$. This is reminiscent of  the equidistribution results of Bourgain, Glibichuk, and Konyagin \cite{BGK} for cyclic subgroups of $(\mathbb{Z}/q\mathbb{Z})^\times$ of size $k$, although their results apply only on a larger scale, namely when $k$ exceeds a power of $q$, than is relevant here. 

Note that if $q=2^p-1$ is a Mersenne prime, then 
\[
\sum_{\substack{2\le n \le q^{1+\varepsilon}\\ (n,q)=1}} \, \frac{|\mu(\ord(n))|}{n^\half \phi(\ord(n))}  \gg \frac{1}{2^\half \phi(\ord(2))} = \frac{1}{2^\half \phi(p)} \gg (\log q)^{-1}.
\]
This shows that one cannot expect to do better than a logarithmic saving in Theorem \ref{main} unless one finds a way to utilize cancelation from $\mu(\ord(n))$.

We remark that in the case when the modulus is a large power of a fixed prime, the first and second moments of $L(\thalf,  \chi)$ in families of primitive Dirichlet characters of any given order, are known by the work of Khan, Milicevi\'{c}, and Ngo \cite{KMN}. The second moment in this case requires the deep input of the Roth-Ridout theorem for $p$-adic diophantine approximation.

 \section{Preliminaries}
 
Throughout, $q$ denotes an odd prime. 

We have (see \cite[Theorem 328]{hardy-wright}) that
\begin{align}
\label{phi-lower} \phi(n) \gg \frac{n}{\log\log n}.
\end{align}

Define, where $p$ denotes a prime, the divisor functions
\begin{align}
\label{divisor-def} d(n;x)=\sum_{\substack{d|n\\ d \le x}}1 \ \ \ \ \ \ \ \ \  \ \text{and} \ \ \ \ \ \ \ \ \  \ d_P(n;x)=\sum_{\substack{d|n\\ d\le x\\ p|d \text{ for some } p\ge P}}1.
\end{align}

Define the smooth number counting function (where $p$ denotes a prime)
\begin{align}
\label{smooth-def} \Psi(x;y) := \sum_{\substack{n\le x \\ p|n \Rightarrow p\le y}}1.
\end{align}
We will use the following basic bound in the theory of smooth numbers.
\begin{lem}\label{smooth} \cite[\S III.5.1 Theorem 1]{ten}
For $x\ge y\ge 2$, we have 
\[
\Psi(x;y)  \ll x \exp\Big(-\frac{\log x}{2\log y}\Big). 
\]
\end{lem}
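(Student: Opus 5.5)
The statement to prove is Lemma \ref{smooth}, the bound $\Psi(x;y)\ll x\exp(-\log x/(2\log y))$. I would use Rankin's method. Write $u=\log x/\log y$. For any $0<\sigma<1$ we have
\[
\Psi(x;y)\le \sum_{\substack{n\ge 1\\ p|n\Rightarrow p\le y}} \Big(\frac{x}{n}\Big)^{\sigma}= x^{\sigma}\prod_{p\le y}\big(1-p^{-\sigma}\big)^{-1}.
\]
The point is to take $\sigma$ slightly less than $1$, namely $\sigma=1-\eta/\log y$ with $\eta$ a fixed constant (I will take $\eta=1/2$, or whatever constant makes the final exponent come out). Then $x^{\sigma}=x\,e^{-\eta u}$. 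This produces the desired saving, provided the Euler product is $O(1)$ times something absorbable.

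The main step is to bound the Euler product. Take logarithms:
\[
\log\prod_{p\le y}(1-p^{-\sigma})^{-1}=\sum_{p\le y}p^{-\sigma}+O(1).
\]
For $p\le y$ we have $p^{1-\sigma}=\exp(\eta\log p/\log y)\le e^{\eta}$, and also $p^{1-\sigma}=1+O(\eta\log p/\log y)$. By Mertens' theorems, $\sum_{p\le y}1/p=\log\log y+O(1)$ and $\sum_{p\le y}\log p/p\ll\log y$. Hence
\[
\sum_{p\le y}p^{-\sigma}=\log\log y+O(1).
\]
So the product is $\ll \log y$. This gives only
\[
\Psi(x;y)\ll x(\log y)\,e^{-\eta u}.
\]

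The obstacle is the stray factor $\log y$. I would handle it by case analysis. If $u\ge 4\log\log y$, say, then choosing $\eta=1$ gives
\[
x(\log y)e^{-u}\le x\,e^{-u/2}\,(\log y)e^{-u/2}\le x\,e^{-u/2},
\]
since $e^{-u/2}\le(\log y)^{-2}$. If instead $u$ is small, I would use a different argument. Every $n\le x$ has all its prime factors at most $x$, so $\Psi(x;y)\le\lfloor x\rfloor$. Thus it suffices to handle the range $u\le C\log\log y$, where the target bound $xe^{-u/2}$ is at least $x(\log y)^{-C/2}$. In that middle range, use the trivial bound when $u\le 2$ (where $e^{-u/2}\ge e^{-1}$). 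For $2<u\le C\log\log y$, note that a $y$-smooth $n\le x$ with $n>\sqrt{x}$ has a divisor $d\in(x^{1/2}/y,\,x^{1/2}]$ composed of primes $\le y$. I would try to run a Rankin bound on a subrange, or replace Mertens by the sharper estimate
\[
\sum_{p\le y}p^{-\sigma}-\sum_{p\le y}p^{-1}\le \eta\,e^{\eta}\sum_{p\le y}\frac{\log p}{p\log y}=O(\eta),
\]
combined with the elementary inequality $\Psi(x;y)\log x\le\sum_{n\le x,\,y\text{-smooth}}\log n=\sum_{p^k\le x,\,p\le y}\log p\;\Psi(x/p^k;y)$. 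This recursive (Hildebrand-type) inequality yields
\[
\Psi(x;y)\ll \frac{x}{\log x}\sum_{p\le y}\frac{\log p}{p}\,\frac{\Psi(x/p;y)}{x/p}+\frac{x}{\log x},
\]
and induction on $u$ then gives $\Psi(x;y)\le Cx\,e^{-u/2}$ with a constant uniform in $u$.

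Combining the two ranges proves the lemma. The delicate point is the induction step: the sum $\sum_{p\le y}\log p/p\approx\log y$ must be paired with $e^{(\log p/\log y)/2}\le e^{1/2}$, and one has to check that $(1/u)\cdot e^{1/2}\cdot(1+o(1))\le 1$ once $u$ exceeds a fixed constant. Since this is a standard result, in practice I would simply cite it, as the paper does.
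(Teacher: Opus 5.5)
The paper does not prove this lemma (it is quoted from Tenenbaum), so your argument has to stand on its own. As written, it leaves a range of $u$ uncovered.

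\textbf{The gap.} Your Rankin step is fine, except that with $\eta=1$ you get $\sigma=1-1/\log y\le 0$ for $y\le e$, so bounded $y$ needs separate treatment. But it only gives $\Psi(x;y)\ll xe^{-u/2}$ when $(\log y)e^{-(\eta-\frac12)u}\ll 1$, i.e.\ for $u\ge 4\log\log y$ when $\eta=1$. Below that, your recursion fails.

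First, the inequality $\Psi(x;y)\log x\le\sum_{n\le x,\,P(n)\le y}\log n$, where $P(n)$ is the largest prime factor of $n$, is backwards, since each $\log n\le\log x$. The correct starting point is the identity
\[
\Psi(x;y)\log x=\sum_{\substack{n\le x\\ P(n)\le y}}\log n+\sum_{\substack{n\le x\\ P(n)\le y}}\log\frac{x}{n}.
\]
Your additive term $x/\log x$ presumably comes from bounding the second sum trivially by $\sum_{n\le x}\log(x/n)\le x$. That term cannot be absorbed. Indeed, $x/\log x\le c\,xe^{-u/2}$ requires $e^{u/2}\ll u\log y$, i.e.\ $u\le 2\log\log y+O(\log\log\log y)$. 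At $u=4\log\log y$ the ratio $(x/\log x)/(xe^{-u/2})=\log y/(4\log\log y)$ is unbounded. So the inductive step cannot give a uniform constant for $2\log\log y+O(\log\log\log y)<u<4\log\log y$, and neither of your cases covers that range. The ``sharper Mertens'' estimate does not help: it only confirms $\zeta(\sigma;y)\asymp\log y$, which is the source of the loss.

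\textbf{The missing idea.} Apply Rankin's bound to \emph{both} sums in the identity, so that the factor $\log x=u\log y$ on the left cancels the $\log y$ lost in the Euler product. Take $\sigma=1-1/(2\log y)$. Then $\sigma>1/4$ for all $y\ge 2$, $x^\sigma=xe^{-u/2}$, $y^{1-\sigma}=e^{1/2}$, and $\zeta(\sigma;y):=\prod_{p\le y}(1-p^{-\sigma})^{-1}\ll\log y$.

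The second sum equals $\int_1^x\Psi(t;y)\,dt/t\le\zeta(\sigma;y)x^\sigma/\sigma$. The first sum equals $\sum_{m\le x,\,P(m)\le y}\psi(x/m;y)$, where $\psi(z;y):=\sum_{p^\nu\le z,\,p\le y}\log p$. This satisfies $\psi(z;y)\le\min\{\psi(z),\pi(y)\log z\}\ll z^\sigma y^{1-\sigma}$: use Chebyshev for $z\le y$, and for $z>y$ compare $y\log z/\log y$ with $y(z/y)^\sigma$. Hence $\Psi(x;y)\log x\ll x^\sigma\zeta(\sigma;y)\ll xe^{-u/2}\log y$, i.e.\ $\Psi(x;y)\ll xe^{-u/2}/u$, with no case split and no induction.

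Keeping the constraint $p^\nu\le x/m$ matters here. If you instead apply Rankin to each $\Psi(x/p^\nu;y)$ and then sum $\sum_{p\le y}\log p/p^\sigma\asymp\log y$, the lost factor comes back.

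Alternatively, your case split can be rescued by taking $\eta>1$. For example, $\eta=2$ for $y\ge e^3$ gives the Rankin range $u\ge\frac23\log\log y+O(1)$, which overlaps the range where $x/\log x$ is harmless. Bounded $y$ is then handled via $\Psi(x;y)\ll(\log x)^{\pi(y)}$.
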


To average character values over our family, we will need
 \begin{lem} \cite[Proposition 1(i)]{Chinta} \label{averaging}
For $d|(q-1)$ and $(n,q)=1$, we have
\[
 \sum_{\substack{ \chi \bmod q \\ \mathrm{ord}(\chi)=\frac{q-1}{d} }} \chi(n) = \phi\Big(\frac{q-1}{d}\Big) \frac{\mu(\mathrm{ord}(n^d))}{\phi(\mathrm{ord}(n^d))}.
\]
If $(n,q)=q$, then the sum on the left is $0$.
 \end{lem}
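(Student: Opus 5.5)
We prove Lemma \ref{averaging} by writing everything in terms of a generator of the character group and reducing the sum to a Ramanujan sum; the only real work is evaluating that sum.

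\textbf{Setup.} Since $q$ is prime, $(\mathbb{Z}/q\mathbb{Z})^\times$ is cyclic. Fix a primitive root $g$ and define the character $\psi$ by $\psi(g^a)=e(a/(q-1))$, where $e(x)=e^{2\pi i x}$. Then $\psi$ generates the cyclic character group of order $q-1$.

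Put $m=(q-1)/d$. The characters of order exactly $m$ are $\psi^{dj}$ with $j$ running over residues mod $m$ coprime to $m$. For $(n,q)=1$, write $n=g^a$. Then
\[
\psi^{dj}(n)=e\Big(\frac{daj}{q-1}\Big)=e\Big(\frac{aj}{m}\Big).
\]
Hence the left side of the lemma equals the Ramanujan sum
\[
c_m(a)=\sum_{\substack{j \bmod m\\ (j,m)=1}} e(aj/m).
\]

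\textbf{Evaluating the Ramanujan sum.} Let $m'=m/(a,m)$. Then $\zeta=e(a/m)$ is a primitive $m'$-th root of unity. As $j$ runs over $(\mathbb{Z}/m\mathbb{Z})^\times$, the powers $\zeta^j$ depend only on $j \bmod m'$. The reduction map $(\mathbb{Z}/m\mathbb{Z})^\times\to(\mathbb{Z}/m'\mathbb{Z})^\times$ is a surjective homomorphism, so each fibre has size $\phi(m)/\phi(m')$. Therefore
\[
c_m(a)=\frac{\phi(m)}{\phi(m')}\sum_{\substack{k \bmod m'\\ (k,m')=1}}\zeta^k .
\]
The inner sum is the sum of all primitive $m'$-th roots of unity. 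By M\"obius inversion of $\sum_{e\mid m'}(\text{sum of primitive }e\text{-th roots})=[m'=1]$, this sum equals $\mu(m')$. So
\[
c_m(a)=\phi(m)\,\frac{\mu(m')}{\phi(m')}.
\]

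\textbf{Identifying $m'$.} It remains to show $m'=\mathrm{ord}(n^d)$. Since $n^d=g^{ad}$ and $g$ has order $q-1$,
\[
\mathrm{ord}(n^d)=\frac{q-1}{(ad,q-1)}=\frac{dm}{d\,(a,m)}=\frac{m}{(a,m)}=m'.
\]
This gives the stated formula.

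\textbf{The case $q\mid n$.} If $(n,q)=q$, then $\chi(n)=0$ for every $\chi$ in the sum, so the sum is $0$.

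The main obstacle is the Ramanujan sum evaluation, specifically the surjectivity and fibre-counting step for the reduction map. Both follow from the Chinese remainder theorem.
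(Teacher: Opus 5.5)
Your proof is correct and complete. The parametrization $\psi^{dj}$ with $(j,m)=1$ of the characters of exact order $m=(q-1)/d$ turns the sum into the Ramanujan sum $c_m(a)$, where $n=g^a$. Your fibre-counting evaluation $c_m(a)=\phi(m)\mu(m')/\phi(m')$ with $m'=m/(a,m)$ is valid. Its two ingredients are standard: surjectivity of $(\mathbb{Z}/m\mathbb{Z})^\times\to(\mathbb{Z}/m'\mathbb{Z})^\times$, and the fact that the primitive $m'$-th roots of unity sum to $\mu(m')$. Finally, $\mathrm{ord}(n^d)=(q-1)/(ad,q-1)=m/(a,m)$ correctly identifies $m'$. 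The degenerate cases ($m=1$, $a\equiv 0$, and $q\mid n$, where even the principal character vanishes) are all covered.

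The paper gives no proof of this lemma; it simply quotes Proposition 1(i) of Chinta. So there is no internal argument to compare against, and your write-up supplies a self-contained justification of the cited fact.

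For comparison, a common alternative avoids fixing a primitive root. One M\"obius-inverts over the order, $\sum_{\mathrm{ord}(\chi)=m}\chi(n)=\sum_{e\mid m}\mu(m/e)\sum_{\chi^e=1}\chi(n)$, and then uses orthogonality on the subgroup of characters of order dividing $e$: the inner sum is $e$ if $n$ is an $e$-th power mod $q$ and $0$ otherwise. This yields the divisor-sum form $\sum_{e\mid (a,m)}\mu(m/e)\,e$ of the same Ramanujan sum, which must still be collapsed to the closed form $\mu(m')\phi(m)/\phi(m')$. Your fibre-counting argument reaches that closed form directly, at the small cost of choosing a generator.
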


We will need an expression for $L(\half, \chi)$ as a Dirichlet sum.
 \begin{lem} \label{afe}
Let $\chi$ be a Dirichlet character mod $q$ of order $q-1$. Then
\[
L(\thalf, \chi) = \sum_{n\le q^\frac{51}{50}} \frac{\chi(n)}{n^\half} W(n) + O(q^{-10})
\]
for some function $W(n)\ll 1$.
 \end{lem}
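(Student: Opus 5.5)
The plan is to derive Lemma \ref{afe} from the standard approximate functional equation for a primitive character, and then to truncate the dual sum. Since $q$ is prime and $\chi$ has order $q-1>1$, $\chi$ is primitive and nonprincipal. Let $\mathfrak a\in\{0,1\}$ be its parity and $\epsilon(\chi)=\tau(\chi)/(i^{\mathfrak a}\sqrt q)$, so $|\epsilon(\chi)|=1$. The standard contour-shift argument (integrate $\Lambda(\frac12+s,\chi)G(s)/s$ with $G(s)=e^{s^2}$, shift to $\Re s=-A$, apply the functional equation) gives
\[
L(\thalf,\chi)=\sum_{n\ge1}\frac{\chi(n)}{n^{\half}}V\Big(\frac{n}{\sqrt q}\Big)+\epsilon(\chi)\sum_{n\ge1}\frac{\overline\chi(n)}{n^{\half}}V\Big(\frac{n}{\sqrt q}\Big),
\]
where $V(y)=\frac{1}{2\pi i}\int_{(2)}\frac{\Gamma((\frac12+s+\mathfrak a)/2)}{\Gamma((\frac12+\mathfrak a)/2)}\pi^{-s/2}G(s)y^{-s}\frac{ds}{s}$. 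Moving the contour left to $\Re s=-1/4$ (picking up the residue $1$) and right to large $\Re s=A$ gives $V(y)\ll 1$ for all $y>0$ and $V(y)\ll_A y^{-A}$ for $y\ge1$.

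The second sum contains $\overline\chi$, which must be rewritten as some $\chi(m)$ to match the claimed shape. For $(n,q)=1$ we have $\overline\chi(n)=\chi(n^{-1})=\chi(\bar n)$, with $\bar n$ the inverse mod $q$ and $1\le\bar n<q$. Extending $\bar n$ to the representatives $\bar n+jq$ would give unbounded weights, so instead I truncate first. By the decay of $V$, the terms with $n>q^{1/2+\varepsilon}$ contribute $O(q^{-10})$ in both sums. Also $\epsilon(\chi)$ is not of the form $\sum_m c_m\chi(m)$ with bounded weights in an obvious way. The intended route is to write $\tau(\chi)=\sum_{a\bmod q}\chi(a)e(a/q)$ and use $\chi(a)\overline\chi(n)=\chi(a\bar n)$. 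The dual term then becomes
\[
\frac{1}{i^{\mathfrak a}\sqrt q}\sum_{n\le q^{1/2+\varepsilon}}\frac{V(n/\sqrt q)}{n^{\half}}\sum_{b\bmod q}\chi(b)\,e\Big(\frac{bn}{q}\Big),
\]
substituting $b=a\bar n$. This is $\sum_{b<q}\chi(b)c_b$ with $c_b=\frac{1}{i^{\mathfrak a}\sqrt q}\sum_{n}n^{-\half}V(n/\sqrt q)e(bn/q)$.

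Setting $W(m)=V(m/\sqrt q)+m^{\half}c_m$ (where each term is present), $W$ is supported on $m\le q\le q^{51/50}$. The parity dependence of $V$ and the factor $i^{-\mathfrak a}$ depend on $\chi$, but only through two cases. They can be absorbed using $\chi(-1)$, for example with $\frac{1+\chi(-1)}2$ projectors and $\chi(-1)\chi(m)=\chi(q-m)$, which is still a character value at an integer below $q$. The main obstacle is the bound $W(m)\ll1$. This needs $m^{\half}|c_m|\ll1$, that is, $\big|\sum_{n\le N}n^{-\half}V(n/\sqrt q)e(mn/q)\big|\ll\sqrt{q/m}$. Partial summation and the geometric-sum bound $\sum_{n\le x}e(mn/q)\ll\|m/q\|^{-1}$ give the bound $\ll\min(q^{1/4},\|m/q\|^{-1})$. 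For $m\le q/2$ this is $\ll q/m$, which exceeds $\sqrt{q/m}$ when $m$ is small. Small $m$ would therefore be handled by the trivial bound $q^{1/4}\le\sqrt{q/m}$ when $m\le\sqrt q$. In the middle range, partial summation against the smooth weight $V(n/\sqrt q)n^{-1/2}$ gains more: a Poisson or van der Corput step gives $\ll(q/m)^{1/2}$. For $m>q/2$ the same argument applies with $q-m$ in place of $m$, and the bound $W(m)\ll 1$ follows from $m\asymp q$. The slack in the exponent $51/50$ leaves room to reorganize with smooth cutoffs if needed.
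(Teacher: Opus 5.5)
There is a genuine gap in your last step, the claim that $W(m)\ll 1$ for $m>q/2$. Your steps up to the Gauss-sum expansion are fine.

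Put $F(m)=\sum_n n^{-1/2}V(n/\sqrt q)\,e(mn/q)$, so that $|m^{1/2}c_m|=(m/q)^{1/2}|F(m)|$. For $m>q/2$ the prefactor $(m/q)^{1/2}$ is $\asymp 1$, so you would need $|F(m)|\ll 1$. Replacing $m$ by $q-m$ only gives $|F(m)|=|F(q-m)|\ll (q/(q-m))^{1/2}$. The size $m\asymp q$ does not help; it is exactly what stops the factor $m^{1/2}$ from being absorbed.

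The bound is in fact false. For $m=q-1$ we have $e(-n/q)=1+O(n/q)$ on the effective range of $n$, so $F(q-1)=\sum_n n^{-1/2}V(n/\sqrt q)+O(q^{-1/4})$. By Mellin inversion this sum equals $c\,q^{1/4}+O(1)$ with $c>0$. Hence $W(q-1)\asymp q^{1/4}$, and $W$ is unbounded near $q$. Your parity projectors have the same defect. The identity $\frac12(1+\chi(-1))\chi(m)=\frac12(\chi(m)+\chi(q-m))$ moves coefficients from small $m$ to points near $q$, where multiplying by $m^{1/2}$ inflates them.

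The repair uses a fact you missed: every character of order $q-1$ is odd. If a generator of the character group were even, every character mod $q$ would be even. So there is no parity case split.

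Moreover, $\chi(m)=-\chi(q-m)$ lets you fold the range $q/2<m<q$ onto $1\le m<q/2$. The coefficient at $m<q/2$ becomes $c_m-c_{q-m}$. Then $m^{1/2}|c_m-c_{q-m}|\le 2(m/q)^{1/2}|F(m)|\ll 1$, using $|F(m)|\ll (q/m)^{1/2}$ for $m\le q/2$. That bound needs no Poisson or van der Corput step. Split the $n$-sum at $n=q/m$, bound the lower part trivially, and use partial summation with $\sum_{n\le x}e(mn/q)\ll q/m$ on the upper part. With these changes your route does work.

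The paper never meets the dual sum, and this is what the slack in the length $q^{51/50}$ is for. It uses the unbalanced approximate functional equation (Iwaniec--Kowalski, Theorem 5.3) with $X=q^{51/100}$. The first sum then carries the weight $V(nq^{-101/100})$ and the dual sum carries $V(nq^{1/100})$. Since $V(x)\ll x^{-A}$ for $x>1$ and $|\tau(\chi)|=q^{1/2}$, the whole dual sum is $O(q^{-10})$. One then takes $W(n)=V(nq^{-101/100})$, which is trivially bounded and independent of $\chi$, so no Gauss-sum expansion is needed.
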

\proof
We first observe that $\chi$ must be an odd character (i.e. $\chi(-1)=-1$). This is because $\chi$ has maximal order, so powers of $\chi$ will give rise to the entire group of Dirichlet characters mod $q$. If $\chi$ were even (i.e. $\chi(-1)=1$) then all characters would be even, which is not true. Thus $L(s,\chi)$ satisfies the functional equation for an odd character,
\[
q^{\frac{s}{2}} \pi^{-\frac{s}{2}} \Gamma\Big(\frac{1+s}{2}\Big) L(s,\chi) = \frac{\tau(\chi)}{iq^\half} q^{\frac{1-s}{2}} \pi^{-\frac{1-s}{2}} \Gamma\Big(\frac{2-s}{2}\Big) L(1-s,\overline{\chi}),
\]
where $\tau(\chi)$ is the Gauss sum (and recall that $|\tau(\chi)|=q^\half$). Then by a standard approximate functional equation, given by \cite[Theorem 5.3]{iwakow} with $X=q^\frac{51}{100}$, we have
\[
L(\thalf, \chi) = \sum_{n \ge 1} \frac{\chi(n)}{n^\half} V\Big( n q^{-\frac{101}{100}} \Big) +   \frac{\tau(\chi)}{iq^\half}  \sum_{n\ge 1} \frac{\overline{\chi}(n)}{n^\half} V\Big(nq^\frac{1}{100}\Big),
\]
where for $x>0$, 
\[
V(x):=\frac{1}{2\pi i } \int_{(2)} (\pi^\half x)^{-s} \Gamma(\tfrac34+\tfrac{s}{2}) \Gamma(\tfrac{3}{4})^{-1} \frac{ds}{s}.
\]
For $0<x\le 1 $, we move the line of integration left to $\Re(s)= -1$, crossing a simple pole at $s=0$, to get $V(x)= 1+ O(x).$  For $x>1$, we move the line of integration right to $\Re(s)= 10000$, where we get the bound $V(x)\ll x^{-10000}$. Either way, we have $V(x)\ll 1$ and we put $W(n) := V(nq^{-\frac{101}{100}})$ for $n\le q^\frac{51}{50}$, so that
\[
L(\thalf, \chi) = \sum_{n\le q^\frac{51}{50}} \frac{\chi(n)}{n^\half} W(n)  + \sum_{n > q^\frac{51}{50} } \frac{\chi(n)}{n^\half} V\Big( n q^{-\frac{101}{100}} \Big)+ \frac{\tau(\chi)}{iq^\half}  \sum_{n\ge 1} \frac{\overline{\chi}(n)}{n^\half} V\Big(nq^\frac{1}{100}\Big).
\]
In the second sum, where $n > q^\frac{51}{50}$, we have $n q^{-\frac{101}{100}} \ge n^\frac{1}{10000} q^\frac{49}{5000}$. Using $V(x)\ll x^{-10000}$, we get that
 $V(n q^{-\frac{101}{100}})\ll n^{-1}q^{-10}$. The same bound holds for $V(nq^\frac{1}{100})$. Then trivially bounding we see that the last two sums are $O(q^{-10})$. 
\endproof


 \section{Initial calculations}\label{sec}

By Lemmas \ref{afe} and \ref{averaging}, with $d=1$, we have
 \begin{align}
\frac{1}{\phi(q-1)}  \sum_{\substack{ \chi \bmod q \\ \mathrm{ord}(\chi)=q-1 }}  L(\thalf,  \chi)  &=  \sum_{\substack{ n\le q^{\frac{51}{50}}\\ (n,q)=1}}  \frac{1}{n^\half}  \frac{\mu(\mathrm{ord}(n))}{\phi(\mathrm{ord}(n))}W(n) + O(q^{-10})\\
\nonumber &= 1+  \sum_{\substack{2\le n\le q^{\frac{51}{50}}\\ (n,q)=1}}  \frac{1}{n^\half}   \frac{\mu(\mathrm{ord}(n))}{\phi(\mathrm{ord}(n))}W(n) + O(q^{-10}).
 \end{align}
Applying absolute values term-wise, we see that to establish Theorem \ref{main}, it suffices to prove
 \[
  \sum_{\substack{2\le n\le q^{\frac{51}{50}}\\ (n,q)=1}}   \frac{1}{n^\half \phi(\mathrm{ord}(n))} \ll (\log q)^{-\delta}.
 \]
We can easily discard the terms with $n\ge q^{\frac{1}{10}}$ by the following argument. We have
\begin{align}
\label{discard}  \sum_{\substack{q^{\frac{1}{10}} \le n\le q^{\frac{51}{50}}\\ (n,q)=1}}   \frac{1}{n^\half \phi(\mathrm{ord}(n))}  \ll q^{-\frac{1}{20}}    \sum_{\substack{1 \le n\le q^{\frac{51}{50}}\\ (n,q)=1}}    \frac{1}{\phi(\mathrm{ord}(n))} \ll q^{-\frac{1}{20}}   q^{\frac{1}{50}} \sum_{ 1\le n \le q-1}    \frac{1}{\phi(\mathrm{ord}(n))}, 
\end{align}
 where the last bound follows because $\ord(n)$ only depends on the residue class of $n$ modulo $q$, and there are $O(q^{\frac{1}{50}})$ translates of the interval $[1,q]$ in the interval $1 \le n\le q^{\frac{51}{50}}$. Now since $\ord(n)$ must divide $q-1$, we have that \eqref{discard} is bounded by 
 \[
 q^{-\frac{3}{100}}   \sum_{ k|(q-1)}   \frac{1}{\phi(k)} \sum_{\substack{1\le n \le q-1\\ \mathrm{ord}(n)=k}} 1 \ll q^{-\frac{3}{100}}   \sum_{ k|(q-1)}   \frac{1}{\phi(k)} \phi(k) \ll q^{-\frac{1}{100}},
 \]
using the fact that there are $\phi(k)$ elements in $[1,q-1]$ with order $k$ (see \cite[Theorem 110]{hardy-wright}) and that the number of divisors of $q-1$ is  bounded by $q^\varepsilon$ for any $\varepsilon>0$ (see \cite[Theorem 317]{hardy-wright}). Thus Theorem \ref{main} is reduced to proving
 \begin{prop} \label{prop}
 \begin{align}
 \label{thesum} \sum_{\substack{2\le n< q^{\frac{1}{10}}}}   \frac{1}{n^\half \phi(\mathrm{ord}(n))} \ll (\log q)^{-\delta}.
 \end{align}
 \end{prop}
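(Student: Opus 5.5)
The plan is to split \eqref{thesum} by size of $n$ and, for each dyadic block, to group the $n$ according to $k=\ord(n)$, which is a divisor of $q-1$. The basic input is that $n^{k}\equiv 1 \pmod q$ with $n\ge 2$ forces $n^{k}\ge q+1$, so
\[
\ord(n)\ \ge\ \frac{\log q}{\log n}.
\]

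\textbf{Step 1: very small $n$.} For $2\le n\le L:=(\log q)^{1/2}$, the bound above together with \eqref{phi-lower} gives
\[
\frac{1}{\phi(\ord(n))}\ll \frac{\log n\,\log\log\log q}{\log q}.
\]
Summing $n^{-1/2}$ against this over $n\le L$ gives $\ll L^{1/2}(\log L)(\log\log\log q)/\log q$, which is a negative power of $\log q$.

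\textbf{Step 2: dyadic blocks with a count by order.} For $L<N<q^{1/10}$, consider the block $n\in[N,2N)$. Let $A_k(N)$ be the number of such $n$ with $\ord(n)=k$. The block contributes
\[
N^{-1/2}\sum_{k\mid q-1}\frac{A_k(N)}{\phi(k)}.
\]
Since $2N<q$, distinct $n$ are distinct residues. Hence $A_k(N)\le\min(N,k)$, because the $n$ with $\ord(n)=k$ lie in the cyclic subgroup of order $k$.

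I would strengthen this with a product-set argument. Take $t\asymp \log q/\log N$. Any product of $t$ elements of $\{n\in[N,2N):\ord(n)\mid k\}$ is an integer less than $q$ lying in the same subgroup of order $k$. Distinct integers below $q$ are distinct residues. Each integer has at most $q^{\varepsilon}$ representations as such a product. Together these give $A_k(N)^{t}\ll k\,q^{\varepsilon}\,t!$, that is, a nontrivial bound for $A_k(N)$ when $k$ is small compared with $N^{t}$.

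\textbf{Step 3: sums over divisors of $q-1$.} Inserting these bounds, the block contribution is at most
\[
N^{-1/2}\Big(\sum_{\substack{k\mid q-1\\ k\le K}}\frac{A_k(N)}{\phi(k)}+N\sum_{\substack{k\mid q-1\\ k>K}}\frac{1}{\phi(k)}\Big).
\]
Using \eqref{phi-lower} and partial summation, both pieces reduce to controlling $d(q-1;x)/x$ for $x$ in suitable ranges.

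For this I would write $d(q-1;x)\le \Psi(x;P)+d_P(q-1;x)$. Lemma \ref{smooth} bounds $\Psi(x;P)\ll x\exp(-\log x/(2\log P))$. Since $q-1$ has at most $\log q/\log P$ prime factors $p\ge P$, extracting one such prime gives
\[
d_P(q-1;x)\le \frac{\log q}{\log P}\,\max_{p\ge P} d(q-1;x/p).
\]
Iterating, or choosing $P$ a power of $\log q$, should give $\sum_{k\mid q-1,\,k>K}1/\phi(k)\ll (\log K)^{-c}$ up to $\log\log$ factors, for $K$ at least a power of $\log q$.

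\textbf{Main obstacle.} The hard step is the middle range, where $N$ is a power of $\log q$ and $k$ is comparable to $N$. There the trivial bound $A_k(N)\le\min(N,k)$ loses exactly the logarithmic saving we need, and $q-1$ may have unusually many divisors near $N$. My first attempt would be to choose the cut-off $K=N^{1/2}(\log q)^{\eta}$ and bound the range $k\le K$ by the product-set estimate of Step 2 (here $t$ is large, so that estimate is strong). The range $k>K$ would then be handled by the smooth/non-smooth divisor decomposition of Step 3. Finally one sums over the $O(\log q)$ dyadic blocks, which the $N^{-1/2}$ factor should absorb once $N\ge (\log q)^{C}$. The intermediate range $L<N<(\log q)^{C}$ would be handled as in Step 1, using $k\ge \log q/\log N$.
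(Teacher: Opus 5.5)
There is a genuine gap, and it sits exactly at the obstacle you identify. Neither of your tools gets across the window $N\asymp t^2$, where $t:=\log q/\log N$, so that $N$ is about $(\log q/\log\log q)^2$.

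\textbf{Why your tools fail there.} Your Step~1 bound, applied to a block $[N,2N)$, is about $N^{1/2}(\log N)(\log\log\log q)/\log q=(N^{1/2}/t)\log\log\log q$. So it only works for $N$ somewhat below $t^2$, not up to an arbitrary $(\log q)^C$ as your last sentence needs.

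Your product-set estimate gives at best $A_k(N)\lesssim (t!\,kR)^{1/t}\approx t$, essentially independent of $k$. Even this relies on $R\le q^{\varepsilon}$, which is unjustified, because the divisor bound is not uniform in the number of factors. For $N=(\log q)^C$ with $C>2$, re-pairing $2t$ distinct primes from $[\sqrt N,\sqrt{2N})$ gives about $(2t/e)^t=q^{1/C+o(1)}$ representations of a single integer.

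To see that a bound $A_k\lesssim t$ cannot suffice, suppose $q-1$ is divisible by every prime up to $c\log q$; this happens for infinitely many $q$ by Linnik's theorem. Then every squarefree $k\in[t,2t]$ divides $q-1$, and $d(q-1;x)\gg x$ for $x\le c\log q$. Take a block with $t^2\le N\le 4t^2$. Set $A_k=\min(\phi(k),\lfloor t/3\rfloor)$ on these $k$ and put the remaining mass on $k=q-1$. This respects every constraint your argument uses: $A_k\le\min(N,\phi(k))$, the product-set bound, $k\ge t$, and $\sum_kA_k=N$. Yet it gives $N^{-1/2}\sum_kA_k/\phi(k)\gg N^{-1/2}t\asymp 1$. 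No divisor counting can recover a saving at this scale.

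\textbf{The missing idea.} What you need is the paper's Lemma~\ref{alpha-lemma}, $\alpha_N(k)\ll k/t$. If $a\in[N,2N)$ has order $k$, the elements of order dividing $k$ are the residues of $a,a^2,\dots,a^k$. Whenever the least positive residue $b$ of $a^r$ lies in $[N,2N)$, the integers $ba^j$ with $1\le j\le \log q/\log 2N-1$ satisfy $2N\le ba^j<q$. So they \emph{are} the residues of $a^{r+j}$, and they miss $[N,2N)$. Hits in the orbit are therefore separated by gaps of length $\gg t$. For $k\le t^2$ this beats your bound; in the example above it forces $A_k\ll 1$.

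Together with $\sum_k\alpha_N(k)=N$ and \eqref{phi-lower}, it gives, for any cutoff $X$, $S(N)\ll N^{-1/2}(\log\log q)\bigl(t^{-1}d(q-1;X)+N/X\bigr)$. With $X=(Nt)^{1/2}$ and the trivial bound $d(q-1;X)\le X$, this yields $S(N)\ll(\log\log q)\,t^{-1/2}$. That disposes of all $N\le\exp((\log q)^{1/10})$ at once. The smooth/non-smooth split is needed only beyond that, where $\log X$ is a power of $\log q$.

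\textbf{Step~3 also needs repair.}
\begin{enumerate}
\item Its tail term equals $N^{1/2}\sum_{k\mid q-1,\,k>K}1/\phi(k)$, which is not small. Bound $\sum_{k>K}A_k/\phi(k)\le N\max_{k>K}1/\phi(k)$ instead, using the total count.
\item The claimed bound $\sum_{k\mid q-1,\,k>K}1/\phi(k)\ll(\log K)^{-c}$ is false for $K$ a fixed power of $\log q$. In the example above, a proportion $\gg_A 1$ of the $\asymp\log\log q$ mass of $\sum_{k\mid q-1}1/k$ lies above $(\log q)^A$. Lemma~\ref{smooth} saves only a constant factor when $x$ and $P$ are both powers of $\log q$.
\end{enumerate}
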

\noindent Of course, there is nothing special about the exponents $\frac{51}{50}$ and $\frac{1}{10}$ in Lemma \ref{afe} and Proposition \ref{prop}. By adjusting the proofs, we could replace them with $1+\varepsilon$ and $4\varepsilon$ for any $\varepsilon>0$, but this would not matter. 
 
 
We study the sum \eqref{thesum} in dyadic intervals. Thus define
 \begin{align*}
S(N):=\sum_{\substack{N\le n< 2N}} \, \frac{1}{n^\half\phi(\ord(n))},
\end{align*}
and note that 
\[
\sum_{\substack{2\le n < q^{\frac{1}{10}}}}   \frac{1}{n^\half \phi(\mathrm{ord}(n))} \le \sum_{j=1}^{ \lfloor \frac{1}{10} \log_2 q \rfloor} S(2^j). 
\]
We will treat the sums $S(N)$ in two ranges of $N$ to prove Proposition \ref{prop}. There are $O((\log q)^{\frac{1}{10}})$ dyadic intervals with $2\le N \le \exp((\log q)^{\frac{1}{10}})$, so for such $N$ we need to show that $S(N)\ll (\log q)^{-\frac{1}{10} -\delta}$ for some $\delta>0$. This is accomplished in Lemma \ref{range1}. There are $O(\log q)$ dyadic intervals with $\exp((\log q)^{\frac{1}{10}})\le N < q^\frac{1}{10}$, so for such $N$ we need to show that $S(N)\ll (\log q)^{-1 -\delta}$ for some $\delta>0$. A stronger bound is proven in Lemma \ref{range2}.


\section{Proof}

First, we need to develop some control over the distribution of $\ord(n)$. For $k|(q-1)$, let 
\begin{align}
\label{alphadef} \alpha_N(k):=|\{ N\le n < 2N: \ord(n)=k \}|.
\end{align} 
The trivial bound for this is $\min\{N, \phi(k)\}$, as there are $\phi(k)$ elements in $[1,q-1]$ with order $k$, but we need a better estimate.

\begin{lem} \label{alpha-lemma} For integers $2 \le N< q^{\frac{1}{10}}$ and $k|(q-1)$, we have
\[
\alpha_N(k) \ll \frac{k \log N}{\log q}.
\]
\end{lem}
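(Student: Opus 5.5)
The plan is to exploit the fact that the integers $n\in[N,2N)$ with $\ord(n)=k$ all lie in the unique subgroup $H\le(\mathbb{Z}/q\mathbb{Z})^\times$ of order $k$, and that products of a few of them are still integers smaller than $q$. Such products are determined by their residue class, so they inject into $H$. Let $A=\{N\le n<2N:\ord(n)=k\}$, so that $\alpha_N(k)=|A|$. Set
\[
m:=\Big\lfloor \frac{\log q}{\log(2N)}\Big\rfloor ,
\]
which satisfies $m\ge 5$ (say) because $N<q^{1/10}$. We also have $m\gg \log q/\log N$, since $\log(2N)\le 2\log N$ for $N\ge2$.

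First I will handle the degenerate case $|A|\le 1$. If $A$ contains some $n$, then $n\ge2$ and $n^k\equiv 1 \pmod q$ with $n^k>1$, so $n^k\ge q+1$. Hence $k\ge \log q/\log(2N)$, i.e. $k\log N/\log q\gg 1\ge|A|$. The same observation shows that in general $k\ge m$ whenever $A\neq\emptyset$.

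For $|A|\ge2$, I will consider the $m$-fold product set
\[
A^{(m)}=\{n_1\cdots n_m: n_i\in A\}.
\]
Every element lies in $[N^m,(2N)^m)\subset[1,q)$, and its residue class lies in $H$. Distinct integers in $[1,q)$ have distinct residues, so $|A^{(m)}|\le |H|=k$. On the other hand, taking logarithms turns $A^{(m)}$ into the $m$-fold sumset of the set of reals $\log A$. For any finite set $B$ of reals, the elementary bound $|mB|\ge m(|B|-1)+1$ holds. To see this, order $B=\{b_1<\dots<b_r\}$ and walk from $mb_1$ to $mb_r$, replacing one copy of $b_i$ by $b_{i+1}$ at a time; this produces a strictly increasing chain of $m(r-1)+1$ sums. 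Hence $m(|A|-1)+1\le k$, so
\[
|A|\le \frac{k}{m}+1\ll \frac{k\log N}{\log q},
\]
where the $+1$ is absorbed using $k\ge m$.

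The only point needing care is the choice of $m$, which must ensure both that $(2N)^m<q$, so that the injection into $H$ works, and that $m\gg\log q/\log N$. The floor definition above gives both. I do not expect a serious obstacle beyond this; the key idea is simply to combine the injectivity of products below $q$ with the trivial Freiman-type lower bound for sumsets of reals.
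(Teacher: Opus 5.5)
Your proof is correct, and it reaches the bound by a different combinatorial mechanism from the paper's. The paper fixes one $a\in[N,2N)$ of order $k$ and lists the subgroup as the orbit $a\,\%\,q,\dots,a^k\,\%\,q$. It then notes that whenever $a^r\,\%\,q\in[N,2N)$, the next $\gg\log q/\log N$ orbit elements are the honest integers $(a^r\,\%\,q)a^j\in[N^2,q)$. So hits in $[N,2N)$ are spaced out along the orbit, giving $\beta_N(k)\ll k\log N/\log q+1$.

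You instead take the $m$-fold product set of $A$ itself. It injects into $H$ because all products stay below $q$, and you bound its size from below by $|mB|\ge m(|B|-1)+1$ for sets of reals. The underlying fact is the same in both: products of about $\log q/\log N$ integers of size $\asymp N$ never wrap around modulo $q$. The bookkeeping is also the same, with the $+1$ absorbed via $k\gg\log q/\log N$ in each case.

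The paper's version is more hands-on, since it only ever multiplies by powers of one small element. Yours needs no generator of $H$ inside $[N,2N)$, only that the counted integers lie in a multiplicatively closed set of size $k$. So it bounds $\beta_N(k)=|H\cap[N,2N)|$ directly. It also isolates the single lossy step, the trivial sumset bound, which is attained only when $A$ is a geometric progression. That makes your formulation the natural entry point for sharper product-set estimates for integers, should one want to improve the lemma.
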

\proof
If there is no element of order $k$ in the interval $[N,2N)$, then we are done. So suppose $a\in[N,2N)$ has order $k$. Then $a^k\equiv 1 \bmod q$, which implies $a^k = 1+ qj$ for some $j\ge 1$ because $a\ge N\ge 2$ rules out the possibility that $j=0$. Thus $a^k\ge q$, which gives the lower bound
\begin{align}
\label{lowerboundk} k\ge\ \frac{\log q}{\log a} \gg \frac{\log q}{\log N}.
\end{align}

Clearly, $\alpha_N(k) \le \beta_N(k)$, where 
\[
\beta_N(k)=|\{ N\le n< 2N: \ord(n)|k \}|.
\]
The set of elements in $(\mathbb{Z}/q\mathbb{Z})^\times$ whose order divides $k$ are congruent mod $q$ to $\{ a, a^2, a^3, \ldots, a^k \}$.  So the set of elements in $[1,q-1]$ whose order divides $k$ is 
\[
\mathcal{A}=\{ a \, \% \, q, a^2 \, \% \, q, a^3 \, \% \, q , \ldots, a^k \, \% \, q \},
\] 
where $a^j \, \% \, q$ denotes the remainder (a value between 1 and $q-1$) when $a^j$ is divided by $q$.

Suppose that $a^r \, \% \, q \in [N,2N)$ for some $1\le r\le k$. Let $1 \le j \le \frac{\log q}{\log 2N}-1$. Then since $a\in[N,2N)$ and $N\ge 2$, we have 
\begin{align}
\label{inequal} 2N\le N^2 \le (a^r \, \% \, q)a^j < (2N)^{1+j} \le q.
\end{align}
Since $a^{r+j} \, \% \, q$ is congruent to $(a^r \, \% \, q)a^j$ mod $q$, and both quantities lie in $[1,q-1]$, we have that $a^{r+j} \, \% \, q = (a^r \, \% \, q)a^j$. Then by \eqref{inequal}, we have $a^{r+j} \, \% \, q \notin [N,2N)$. Thus considering the elements $a^r \, \% \, q$ sequentially, for $1\le r\le k$, whenever an element lies in $[N,2N)$, the next $\lfloor \frac{\log q}{\log 2N}-1 \rfloor \gg \frac{\log q}{\log N}$ or more elements (unless the list is exhausted) do not lie in $[N,2N)$. Thus 
\[
\beta_N(k) \ll \frac{k}{ \frac{\log q}{\log N}} + 1 \ll \frac{k}{ \frac{\log q}{\log N}},
\]
using \eqref{lowerboundk}.
\endproof


The following is our core bound for $S(N)$.
\begin{lem} \label{core} For integers $2 \le N< q^{\frac{1}{10}}$ and $k|(q-1)$, we have
\[
S(N) \ll \frac{(\log\log q)(\log N)}{N^\half \log q} d\Big(q-1;   \Big(\frac{N \log q}{\log N}\Big)^\half \Big).
\]
\end{lem}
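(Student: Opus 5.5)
Group the terms of $S(N)$ by the value $k=\ord(n)$. Since $n\ge N$, we have
\[
S(N)\le N^{-\half}\sum_{k\mid(q-1)}\frac{\alpha_N(k)}{\phi(k)}.
\]
For each divisor $k$ we will use whichever of two bounds on $\alpha_N(k)$ is better. The first is the trivial bound $\alpha_N(k)\le N$. The second is Lemma \ref{alpha-lemma}, which gives $\alpha_N(k)\ll k\log N/\log q$. We also use \eqref{phi-lower} in the form $\phi(k)\gg k/\log\log q$, which is valid since $k\le q$. The two bounds balance at $k\approx K:=N\log q/\log N$. We split the divisors at $\sqrt K$ and handle each range separately.

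\emph{Large divisors, $k>\sqrt K$.} For a given $k$, write $k'=(q-1)/k$. Then $k>\sqrt K$ means $k'<(q-1)/\sqrt K$, which is not a small range, so counting the complementary divisors directly does not help. A better first attempt is to look at the size of $\alpha_N(k)/\phi(k)$ for each $k$. If $k\le K$, Lemma \ref{alpha-lemma} gives a ratio of $\ll (\log\log q)\log N/\log q$. If $k>K$, the trivial bound gives a ratio of $\ll N\log\log q/k\le (\log\log q)\log N/\log q$. So every nonzero term is at most $(\log\log q)\log N/\log q$, which is exactly the prefactor in the lemma. It then remains to show that the number of divisors $k$ contributing is $\ll d(q-1;\sqrt K)$.

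\emph{Counting the contributing divisors.} This is the main obstacle. A divisor $k$ contributes only if some $n\in[N,2N)$ has order exactly $k$. I would try to inject such $k$ into divisors of $q-1$ of size at most $\sqrt K$. A natural map is $k\mapsto\min(k,(q-1)/k)$, but that requires $k$ to be near $1$ or near $q$, so it is too crude on its own. Instead I would combine the per-term bound with a sum over large $k$. For $k>\sqrt K$, estimate $\sum_{\sqrt K<k\le K}\alpha_N(k)/\phi(k)$ by summing $\alpha_N(k)$ itself: the sets $\{n:\ord(n)=k\}$ are disjoint, so $\sum_k\alpha_N(k)\le N$. This gives $\sum_{k>\sqrt K}\alpha_N(k)/\phi(k)\ll (\log\log q)N/\sqrt K=(\log\log q)\bigl(N\log N/\log q\bigr)^{\half}$.

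This is not obviously of the claimed shape, and the mismatch is why I regard this step as the hard one. I would try a hybrid argument. Use the disjointness $\sum_k\alpha_N(k)\le N$ together with the pointwise bound $\alpha_N(k)\ll k\log N/\log q$ in a ``large sieve'' fashion. Order the large $k$ and note that $\sum_{k\in\mathcal K}\alpha_N(k)/k$ is maximised when the mass $N$ is placed on the smallest available divisors, each saturated at its cap $k\log N/\log q$. Once saturation is reached, each divisor contributes ratio at most $\log N/\log q$, and one needs about $\sqrt K$-many of them before the mass is exhausted. I would then try to bound the number of divisors of $q-1$ in $(\sqrt K,K]$ needed by pairing each one with a divisor $\le\sqrt K$; if no such pairing is available, I would settle for the count $d(q-1;K)$.

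\emph{Small divisors, $k\le\sqrt K$.} Here I simply use the per-term bound $(\log\log q)\log N/\log q$, and there are at most $d(q-1;\sqrt K)$ such $k$. Multiplying by the factor $N^{-\half}$ and combining with the large-divisor estimate would yield the stated bound.
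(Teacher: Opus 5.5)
Your proposal has a genuine gap at exactly the step you flag, and what it actually proves is weaker than the lemma. The only estimate you establish for $k>\sqrt K$ (with $K=N\log q/\log N$) is the mass bound $\sum_{k>\sqrt K}\alpha_N(k)/\phi(k)\ll(\log\log q)N/\sqrt K=(\log\log q)\frac{\log N}{\log q}\sqrt K$. So your argument gives
\[
S(N)\ll\frac{(\log\log q)\log N}{N^{\half}\log q}\,\sqrt K ,
\]
i.e.\ the lemma with $d(q-1;\sqrt K)$ replaced by its trivial bound $\sqrt K$. That is all Lemma \ref{range1} uses. It is useless for Lemma \ref{range2}, however, whose whole saving comes from $d(q-1;\mathcal K)$ being much smaller than $\mathcal K=\sqrt K$. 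The pairing of divisors in $(\sqrt K,K]$ with divisors $\le\sqrt K$ is never constructed, and the fallback count $d(q-1;K)$ is not the quantity in the statement.

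The paper handles this step with the greedy/extremal argument you sketch, on the same setup as yours ($\phi(k)\gg k/\log\log q$, the caps \eqref{alpha-bound}, the total mass \eqref{alpha-total}). It saturates the smallest $k$ first and compares the saturated mass with $C\frac{\log N}{\log q}\sum_{k\le M}k=C\frac{\log N}{\log q}\frac{M(M+1)}{2}$, concluding that only divisors $k\le\sqrt K$ receive mass. That comparison, though, counts every integer $k\le M$ as an available order, whereas only divisors of $q-1$ are. So it does not settle the issue you raised. Suppose $q-1$ has few divisors below $\sqrt K$ but $r\le\sqrt K/(2C)$ prime factors $p_i\in(\sqrt K,2\sqrt K)$. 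Saturating every $p_i$ costs at most $2Cr\sqrt K\frac{\log N}{\log q}\le N$; putting the remaining mass on $k=q-1$ then respects both constraints. This gives $\mathcal S\ge Cr\frac{\log N}{\log q}$, which is not $\ll\frac{\log N}{\log q}d(q-1;\sqrt K)$ when $r$ is large. What the extremal argument legitimately yields is that $\ll\sqrt K$ divisors are saturated, which is exactly your bound. Proving the lemma as stated needs more information about $\alpha_N$ than these two constraints.

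For Theorem \ref{main}, it is enough to move your threshold. For any $T\ge\sqrt K$,
\[
\sum_{k\mid(q-1)}\frac{\alpha_N(k)}{k}\le C\frac{\log N}{\log q}\,d(q-1;T)+\frac NT .
\]
The smooth/rough decomposition from Lemma \ref{range2} gives $d(q-1;T)\ll T\eta$ uniformly for $T\ge\mathcal K$, with $\eta=\exp(-\frac14(\log q)^{1/20})$. Taking $T=\sqrt{K/\eta}$ then yields $S(N)\ll(\log\log q)(\eta\log N/\log q)^{\half}$, which is ample in that range.
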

\proof
Recall the definition \eqref{alphadef}. We have
\begin{align}
\label{sum_k} S(N) =\sum_{k|(q-1)} \frac{1}{\phi(k)}\sum_{\substack{N\le n\le 2N\\ \ord(n)=k} }  \frac{1}{n^\half} \ll \frac{1}{N^\half}\sum_{k|(q-1)} \frac{ \alpha_N(k)}{\phi(k)}\ll \frac{\log\log q}{N^\half}\sum_{k|(q-1)} \frac{ \alpha_N(k)}{k},
\end{align}
using \eqref{phi-lower}. By Lemma \ref{alpha-lemma}, we have that
\begin{align}
\label{alpha-bound} \alpha_N(k)\le C \frac{k \log N }{\log q}
\end{align}
for some constant $C>100$, say.
Furthermore, as there are $N$ integers in the interval $[N,2N)$, and each has a unique order, we have
\begin{align}
\label{alpha-total}  \sum_{k|(q-1)} \alpha_N(k) = N.
\end{align}
Now to find an upper bound for the sum $\mathcal{S}:= \sum\limits_{k|(q-1)} \frac{ \alpha_N(k)}{k}$, we employ a greedy strategy. By \eqref{alpha-total}, the problem is to distribute the total $N$ among the values $\alpha_N(k)$, subject to the pointwise bounds \eqref{alpha-bound}, so as to maximize $\mathcal{S}$. Since the coefficients $\frac{1}{k}$ strictly decrease with $k$, the sum $\mathcal{S}$ is maximized by taking 
\[
\alpha_N(k)= C \frac{k \log N}{\log q}
\]
for all small $k$ first, filling greedily from $k=1$ upwards. If the equality above is satisfied for the first $K$ values of $k$, with $\alpha_N(K+1)< C \frac{(K+1) \log N}{\log q}$ and $\alpha_N(k)=0$ for $k\ge K+2$, then we have
\[
C \sum_{ k\le K} \frac{k \log N}{\log q}  \le  \sum_{\substack{k|(q-1)\\ k\le K+1}} \alpha_N(k) = N.
 \]
Since the sum on the left equals $C \frac{\log N}{\log q} \frac{K(K+1)}{2}$, this implies that $K\le  \frac12 N^\half (\frac{\log q}{\log N})^\half $. Thus by \eqref{sum_k}, we have
\begin{align}
\label{sum_k2} S(N) \ll \frac{\log\log q}{N^\half} \sum_{\substack{ k|(q-1) \\ k\le  N^\half  (\frac{\log q}{\log N})^\half }} \frac{ \frac{k\log N}{\log q}}{k} =  \frac{(\log\log q)(\log N)}{N^\half \log q} \sum_{\substack{ k|(q-1) \\ k\le  N^\half  (\frac{\log q}{\log N})^\half }} 1.
\end{align}
\endproof


We are now ready to prove our bounds for $S(N)$. The argument is different according to the size of $N$. For smaller values of $N$ we have

\begin{lem} \label{range1} For integers $ 2 \le N\le \exp((\log q)^\frac{1}{10})$, we have $S(N) \ll (\log q)^{-\frac{2}{5}}.$
\end{lem}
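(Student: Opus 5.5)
We will apply Lemma \ref{core} directly and bound the divisor count by its trivial bound, namely the number of integers up to the cutoff. Put
\[
X=\Big(\frac{N\log q}{\log N}\Big)^\half .
\]
Then $d(q-1;X)\le X$, and Lemma \ref{core} gives
\[
S(N)\ll \frac{(\log\log q)(\log N)}{N^\half\log q}\cdot \frac{N^\half(\log q)^\half}{(\log N)^\half} = \frac{(\log\log q)(\log N)^\half}{(\log q)^\half}.
\]
In the range $2\le N\le \exp((\log q)^{\frac1{10}})$ we have $\log N\le (\log q)^{\frac1{10}}$. Hence
\[
S(N)\ll (\log\log q)\,(\log q)^{\frac1{20}-\frac12}=(\log\log q)(\log q)^{-\frac{9}{20}}.
\]
Since $\frac{9}{20}>\frac25$, the factor $\log\log q$ is absorbed by $(\log q)^{\frac1{20}}$, which gives $S(N)\ll(\log q)^{-\frac25}$.

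For very small $N$ (for instance $N$ bounded) Lemma \ref{core} still applies, since it holds for all $2\le N<q^{\frac1{10}}$. In that case the bound above only improves, because $(\log N)^\half$ is small. One could also check directly that $\phi(\ord(n))\gg \log q/\log\log q$ there, using $\ord(n)\ge \log q/\log n$. So no separate case is needed.

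The only point requiring care is that the constants in Lemma \ref{core} are uniform in $N$, and they are as stated. I do not expect a genuine obstacle. The trivial divisor bound suffices here precisely because $N$ is so small that $X$ is only about $(\log q)^{\frac12+\frac1{20}}$. The subtler argument, using $d_P$ and the smooth-number bound of Lemma \ref{smooth}, is presumably needed only for the larger range treated in Lemma \ref{range2}.
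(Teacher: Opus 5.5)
Your proof is correct and matches the paper's argument exactly: you apply Lemma~\ref{core}, bound $d(q-1;X)$ trivially by $X$ to get $S(N)\ll (\log\log q)(\log N)^{1/2}(\log q)^{-1/2}$, then insert $\log N\le(\log q)^{1/10}$ and absorb the $\log\log q$ into the spare factor $(\log q)^{1/20}$. Your remarks about very small $N$ are harmless but unnecessary, since Lemma~\ref{core} already holds uniformly for $2\le N<q^{1/10}$.
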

\proof
By Lemma \ref{core}, we have
\begin{align*}
S(N) &\ll \frac{(\log\log q)(\log N)}{N^\half \log q} d\Big(q-1;   \Big(\frac{N \log q}{\log N}\Big)^\half \Big) \\  
&\ll \frac{(\log\log q)(\log N)}{N^\half \log q} \cdot  \Big(\frac{N \log q}{\log N}\Big)^\half \\ 
&\ll \frac{(\log N)^\half \log\log q}{(\log q)^\half}.
\end{align*}
Inserting the upper bound $N\le \exp((\log q)^\frac{1}{10})$, we get 
\[
S(N)\ll  \frac{(\log q)^\frac{1}{20} \log\log q}{(\log q)^\half}\ll (\log q)^{-\frac25}.
\]
 \endproof


For larger values of $N$, we have

\begin{lem}\label{range2} For integers $ \exp((\log q)^\frac{1}{10}) \le N < q^{\frac{1}{10}}$, we have $S(N) \ll \exp( -(\log q)^{\frac{1}{40}} )$.
\proof
Let 
\begin{align*}
\mathcal{K}=   N^\half  \Big(\frac{\log q}{\log N}\Big)^\half
\end{align*}
and 
\begin{align*}
 P=  \exp(  (\log q)^\frac{1}{20}).
\end{align*}
Note that in the range $ \exp((\log q)^\frac{1}{10}) \le N < q^{\frac{1}{10}}$ under consideration, we have
\begin{align}
\label{note} P \ll (N^\half)^{2(\log q)^{-\frac{1}{20}}} \ll \mathcal{K}^{2(\log q)^{-\frac{1}{20}}}.
\end{align}
By Lemma \ref{core}, we have 
\begin{align}
\label{S-bound} S(N) \ll \frac{(\log\log q)( \log N) }{N^\half \log  q} d(q-1;\mathcal{K} ).
\end{align}
Every divisor of $q-1$ bounded by $\mathcal{K}$ either has at least one prime factor larger than $P$, or it is an integer less than $\mathcal{K}$ with all prime factors bounded by $P$. Thus
\begin{align}
\label{est-0} d(q-1;\mathcal{K})\le  d_P (q-1;\mathcal{K})+ \Psi(\mathcal{K}; P ).
\end{align}
Since $P^{(\log q)^{\frac{19}{20}}}>q-1$, we have that $q-1$ has at most $(\log q)^{\frac{19}{20}}$ prime factors $p$ larger than $P$. Thus
\begin{align}
\label{est-1} d_P (q-1;\mathcal{K})\ll  \sum_{\substack{p|(q-1)\\ p\ge P }} \, \sum_{\substack{d\le \mathcal{K} \\ p|d }}1 \ll  \sum_{\substack{p|(q-1)\\ p\ge P }} \Big\lfloor \frac{\mathcal{K}}{p}\Big\rfloor \ll (\log q)^{\frac{19}{20}} \frac{\mathcal{K}}{P} \ll \mathcal{K} \exp( -(\log q)^{\frac{1}{30}} ). 
\end{align}
By Lemma \ref{smooth} and \eqref{note}, we have
\begin{align}
\label{est-2} \Psi(\mathcal{K}; P ) \ll \Psi(\mathcal{K};  \mathcal{K}^{2(\log q)^{-\frac{1}{20}}} )\ll \mathcal{K} \exp( -\tfrac14 (\log q)^{\frac{1}{20}} ).
\end{align}
The result now follows by \eqref{S-bound}, \eqref{est-0}, \eqref{est-1}, \eqref{est-2}.
\endproof
\end{lem}

By the discussion at the end of Section \ref{sec}, the bounds given in Lemmas \ref{range1} and \ref{range2} are sufficient to complete the proof of Proposition \ref{prop}, and hence Theorem \ref{main}.

\

{\bf Acknowledgments.} I thank Agniva Dasgupta for some valuable comments.

\bibliographystyle{amsalpha}
\bibliography{first}

\end{document}